\newcommand{\F}{{\mathbb F}}
\newcommand{\Z}{{\mathbb Z}}
\newcommand{\bP}{{\mathbb P}}
\newcommand{\Q}{{\mathbb Q}}
\newcommand{\R}{{\mathbb R}}
\newcommand{\C}{{\mathbb C}}
\newcommand{\cC}{{\mathcal C}}
\newcommand{\rad}{{\operatorname{rad}\,}}
\newcommand{\lra}{\longrightarrow}
\newcommand{\too}{\longmapsto}
\newtheorem{thm}{Theorem}
\newtheorem{prop}{Proposition}
\newtheorem{lem}{Lemma}
\newtheorem{cor}{Corollary}
\title{Parametrizing Algebraic Curves}
\author{F. Lemmermeyer}
\address{M\"orikeweg 1, 73489 Jagstzell}
\email{hb3@ix.urz.uni-heidelberg.de}
\begin{document}

\begin{abstract}
We present the technique of parametrization of plane algebraic curves
from a number theorist's point of view and present Kapferer's simple 
and beautiful (but little known) proof that nonsingular curves of degree 
$> 2$ cannot be parametrized by rational functions.
\end{abstract}

\maketitle
\begin{center} \today  \end{center}

\section{Introduction}

The parametrization of plane algebraic curves (or, more 
generally, of algebraic varieties) is an important tool for 
number theorists. In this article we will briefly sketch some
background, give a few applications, and then point out
the limits of the method determined by Clebsch's Theorem
according to which curves can be parametrized by rational 
functions if and only if their genus is $0$. The main contribution
of this article is a presentation of Kapferer's simple proof of 
the following special case of Clebsch's theorem: nonsingular 
curves of degree $\ge 3$ cannot be parametrized by rational 
functions. As we will see, the similarity with Fermat's 
Last Theorem is more than a pure accident.

The material presented below can be used for giving undergraduate
introductions to projective geometry or algebraic geometry\footnote{As
for introductions to projective geometry, my personal favorite is 
Samuel's \cite{Samuel}. Very nice introductions to algebraic curves 
are Reid's \cite{Reid} or Fulton's classical \cite{Ful}.} an 
arithmetic touch. Those who would like to study modern techniques
for parametrizing curves and varieties should consult the work
of Winkler and his coauthors; see e.g. \cite{SWP}.

\section{From Calculus to Multiplicity}
Computing tangents to a smooth function is an easy exercise in
elementary calculus. Given the problem of finding the tangent 
to the function $y = f(x) = x^4 + x + 1$ at $x = 0$, most 
students probably would start computing the derivative
$f'(x)$ and follow the routines that they have memorized. 
Those who remember that tangents are supposed to be linear
approximations of $f$ might be able to guess that the equation
of the tangent must be $y = x+1$, i.e. the linearization of 
$y = f(x)$: in fact, for $x \approx 0$, the term $x^4$ is 
very small compared to $x+1$.

This simple observation suffices for defining and computing the 
tangents to any polynomial function not just over the reals as
in calculus but over any field of interest to number theorists. 
For finding the tangent to $y = f(x) = x^n$ in $x = a$, consider 
the function $g(x) = f(x+a) = (x+a)^n$ at $x = 0$; since 
$g(x) = x^n + \ldots + \binom{n}{1}xa^{n-1} + a^n$, the tangent 
to $g$ in $x = 0$ is $y = na^{n-1}x+a^n$, hence the tangent to 
$f$ in $x = a$ is $y = na^{n-1}(x-a)+a^n$. In particular, the 
slope of the tangent to $y = f(x) = x^n$ is $f'(a) = na^{n-1}$.

The same method works for tangents to general algebraic curves:
for finding the tangent to the unit circle $X^2 + Y^2 = 1$
at $(x,y) = (1,0)$, we shift the coordinate system and consider
$(X+1)^2 + Y^2 = 1$ at $(0,0)$, giving the tangent $X = 0$.
Thus the tangent to the unit circle in $(1,0)$ is $X = 1$.

Does this method work for all curves and all points? The answer 
is no: consider the curve $F(X,Y) = 0$ and a point $P(a,b)$; then 
we have to look at $G(X,Y) = F(X+a,Y+b)$ at the origin. Since 
$G(0,0) = 0$, the polynomial $G(X,Y)$ has no constant term.
The linear terms of $G$ define a tangent unless there are
no linear terms at all. This may happen, as the example
$G(X,Y) = Y^2 - X^3 - X^2$ shows\footnote{In this case,
neglecting the cubic terms we get $Y^2 = X^2$, which is the
pair of lines $Y = X$ and $Y = -X$. In fact, the curve 
defined by $g$ has ``two'' tangents at the origin. Similarly,
neglecting the higher terms of the cubic $Y^2 = X^3$ we get 
$Y^2 = 0$, indicating that this cubic has $Y = 0$ as a ''double
tangent'' at the origin. In general, the quadratic terms
$aX^2 + bXY + cY^2$ factor over some quadratic extension of
the base field; in plots of the curves, the two tangents are
visible only if the quadratic extension is real. As an example
of two imaginary tangents, consider the cubic $Y^2 = X^3 - X^2$:
over the reals, this curve has an isolated point at the origin}. 
In such a case, we say that the curve defined by $g$ is singular 
at the origin, or that the curve defined by $f$ is 
singular at $P$. The multiplicity of the singular origin on $g$ is 
the smallest degree of any term occurring in the equation of the 
curve. Curves of high degree 
can have many singular points. Clearly a curve $F(X,Y) = 0$
with $F(0,0) = 0$ is singular at the origin if and only if the
partial derivatives $\frac{\partial F}{\partial X} = F_X$
and $\frac{\partial F}{\partial Y} = F_Y$ both vanish at 
$(0,0)$. It is a simple exercise to show that the curve defined
by $F(X,Y) = 0$ is singular at the affine point $P = (x,y)$ if and
only if $F(P) = F_X(P) = F_Y(P) = 0$.

It remains to bring in the points at infinity by switching to 
the projective point of view. The affine curve $\cC:y^2 = x^4 + 1$ 
can be ''projectivized'' by 
homogenizing the defining equation; the projective closure of $\cC$
is the projective curve defined by $Y^2Z^2 = X^4 + Z^4$. The affine
curve does not have a singular point in the affine plane; its projective 
closure $Y^2Z^2 = X^4 + Z^4$ has the point $[0:1:0]$ at infinity, which 
is easily seen to be singular since the partial derivatives of
$F(X,Y,Z) = Y^2Z^2 - X^4 - Z^4$ all vanish at this point.
 
Extending the affine criterion of singularity to the projective situation
is straight forward:

\begin{thm}
Let $K$ be an algebraically closed field. A point $P$ in the projective 
plane over $K$ is a singular point of the projective curve
defined by $F(X,Y,Z) = 0$ if and only if $F_X(P) = F_Y(P) = F_Z(P) = 0$.
\end{thm}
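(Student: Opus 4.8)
The plan is to reduce the projective statement to the affine criterion already established in the excerpt, using the symmetry of the three homogeneous coordinates together with Euler's identity for homogeneous polynomials. First I would observe that both the condition $F_X(P) = F_Y(P) = F_Z(P) = 0$ and the notion of singularity are symmetric under permutations of $X, Y, Z$, and that the three charts $X \neq 0$, $Y \neq 0$, $Z \neq 0$ cover the projective plane; hence there is no loss of generality in assuming $P$ lies in the chart $Z \neq 0$. Writing $P = [a:b:1]$ and dehomogenizing, I set $f(x,y) = F(x,y,1)$, so that $P$ corresponds to the affine point $(a,b)$ on the curve $f = 0$.

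Next I would compute the affine partials by the chain rule: since $Z$ is held at the constant value $1$, differentiation gives $f_x(a,b) = F_X(P)$ and $f_y(a,b) = F_Y(P)$. By the affine criterion quoted above, $P$ is a singular point exactly when $f(a,b) = f_x(a,b) = f_y(a,b) = 0$, that is, when $F(P) = F_X(P) = F_Y(P) = 0$. Thus the projective statement will follow once I can trade the vanishing condition $F(P) = 0$ for the vanishing of the remaining partial $F_Z(P)$.

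This trade is exactly what Euler's identity provides. If $F$ is homogeneous of degree $d$, then $X F_X + Y F_Y + Z F_Z = d\,F$; this is immediate for a single monomial and extends by linearity, so I would record it first as a preliminary lemma. Evaluating at $P = [a:b:1]$ yields $a F_X(P) + b F_Y(P) + F_Z(P) = d\,F(P)$. If $P$ is singular, then $F(P) = F_X(P) = F_Y(P) = 0$ and Euler forces $F_Z(P) = 0$, giving one implication; conversely, if all three partials vanish, Euler gives $d\,F(P) = 0$, whence $F(P) = 0$, and the affine criterion makes $P$ singular.

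The one place that requires care — and the step I expect to be the real obstacle — is the deduction $F(P) = 0$ from $d\,F(P) = 0$ in the converse direction, which fails when $\operatorname{char} K$ divides $d$ (for instance $F = (X+Y+Z)^p$ in characteristic $p$ has all partials identically zero yet is nonzero off a line). I would resolve this by adopting the standard convention that a singular point of a curve is by definition a point \emph{of} the curve, so that $F(P) = 0$ is built into the only-if hypothesis and need not be recovered from Euler; alternatively one restricts to $\operatorname{char} K \nmid \deg F$, which covers every situation arising in the sequel.
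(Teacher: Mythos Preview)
The paper does not actually supply a proof of this theorem: it is stated immediately after the remark that ``extending the affine criterion of singularity to the projective situation is straight forward'' and then the exposition moves on. Your argument---pass to an affine chart, identify $f_x,f_y$ with $F_X,F_Y$, and use Euler's identity $XF_X+YF_Y+ZF_Z=dF$ to exchange $F(P)=0$ for $F_Z(P)=0$---is precisely the standard verification the author is alluding to, and it is correct. Your discussion of the characteristic obstruction (when $\operatorname{char} K \mid \deg F$ the converse direction can fail unless one builds $F(P)=0$ into the definition of ``singular point'') is a genuine subtlety that the paper glosses over; since the paper later applies the criterion only in characteristic~$0$, either of your proposed fixes is adequate for its purposes.
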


A very simple construction of singular curves is based on the observation
that points in which a curve intersects itself\footnote{See e.g. the curve
in Fig. \ref{FP3}.} are necessarily singular. In particular, all curves 
$F(X,Y) = 0$ with $F = GH$ for nonconstant polynomials $G, H \in K[X,Y]$ 
have singularities at the points where the curves $G(X,Y) = 0$ and 
$H(X,Y) = 0$ intersect. The number of such intersection points is determined
by Bezout's Theorem. In its weakest form it says that two curves of degree 
$m$ and $n$ and without a common component intersect in at most $mn$ 
points; the strong version claims that if the points are counted with 
proper multiplicity, over an algebraically closed field and in the 
projective plane, then there are exactly $mn$ points of intersection.

Bezout's Theorem can be used to classify singular curves
with small degree:

\begin{itemize}
\item Singular conics\footnote{Conics (short for conic sections) are curves 
      of degree $2$; over the reals, nonsingular conics are ellipses, 
      hyperbolas, and parabolas.} are reducible, i.e., consist of two lines.
\item Cubics with two singular points are reducible: the line through 
      two singular points intersects the cubic in four points (counted
      with multiplicity), hence the cubic must contain this line by 
      Bezout's Theorem. 
\item In particular, irreducible cubics can have at most one singular 
      point\footnote{If the base field $F$ is perfect, this singular point 
      necessarily has coordinates in $F$ by Galois theory. Over $\F_2[T]$, the 
      curve $y^2 = x^3 + T$ is singular in $(0,\sqrt{T}\,)$; its coordinates 
      lie in an inseparable quadratic extension of $F$.}; if there were 
      two of them, the line through these two points would intersect the 
      cubic with multiplicity $4 > 1 \cdot 3$, which is only possible if 
      the cubic contains the line, i.e., is reducible.
\end{itemize}

\section{Parametrizing Conics}

One of the most classical diophantine problems is the construction
of Pythagorean triples: these are points $(x,y,z)$ with integral
coordinates lying on the projective curve $X^2 + Y^2 = Z^2$. 
The Pythagorean equation describes the projective closure of the unit 
circle $x^2 + y^2 = 1$. 

\begin{figure}[!ht]
\begin{center}
\includegraphics[width=5.2cm,height=4cm]{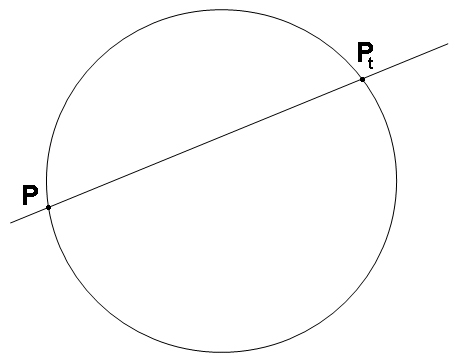}
\end{center}
\caption{Parametrizing the Unit Circle}\label{FP1}
\end{figure}

The geometric method\footnote{The first known geometric parametrization
of an algebraic curve is due to Newton \cite{New}, and is contained in an
article published only in 1971. The nowadays ubiquitous parametrization of
the unit circle first appeared at the beginning of the 20th century in 
textbooks such as Kronecker's \cite{Kro}.} 
for finding a parametrization of the 
rational points\footnote{The notion of a rational point depends
on the base field. Below, we will tacitly assume that the base 
field is $\Q$, and then rational points are points with rational
coordinates. More generally, for curves defined by a polynomial
$f \in K[X,Y]$, a $K$-rational point is a point on the curve with
coordinates in $K$.} on the unit circle (these are points $(x,y)$ 
with $x, y \in \Q$ such that $x^2 + y^2 = 1$) is the following:
given a point such as $Q = (-1,0)$, the lines $y = t(x+1)$ through 
$Q$ intersect the circle in $Q$ and another point $P_t$, which
is easily computed as\footnote{Observe that these formulas show
that the unit circle has a $\Q(t)$-rational point, that is, a
point defined over the rational function field of $\Q$.}  
$$ P_t = \bigg( \frac{1-t^2}{1+t^2}, \frac{2t}{1+t^2} \bigg). $$
Each rational slope gives a rational point, and conversely, every
rational point $P = (x,y) \ne Q$ on the unit circle has the form 
$P = P_t$ for $t = \frac{y}{x+1}$. Thus the parametrization 
$\Q \lra \cC \setminus \{Q\}: t \too P_t$ is a bijection between 
the affine line over $\Q$ and the conic (minus $Q$). Allowing
$t = \infty$ gives a bijection between the projective line
$\bP^1\Q$ and $\cC(\Q)$. Setting $t = \frac mn$ then provides us
with the parametrization
$$ x = m^2 - n^2, \quad y = 2mn, \quad z = m^2 + n^2 $$
of Pythagorean triples, i.e., integral solutions of the
equation $x^2 + y^2 = z^2$.

The same argument goes through for all irreducible conics: if we know
a single point $P$ on a conic $\cC$, we can find all of them by 
intersecting $\cC$ with lines through $P$. Some conics, such as
$x^2 + y^2 = 3$, do not have any points over certain fields like
$\Q$ or $\F_3$; for parametrizing them, we have to find a point
over an extension field (such as $\Q(\sqrt{3}\,)$, $\Q(i)$ or
$\F_9$), and the formulas giving the parametrization then will
involve certain irrationals.

The parametrization of conics can be used for solving
a variety of problems. The Arabs already knew how to 
find infinitely many integral solutions of the equation 
$x^4 + y^2 = z^2$ by solving $X^2 + y^2 = z^2$ and 
showing that $X = x^2$ infinitely often. Their unability 
of solving the similar equation $x^4 + y^4 = z^2$ made
them conjecture that there are no integral solutions; 
Fermat and Euler later found full proofs.

The rational parametrization of the unit circle can be used to 
transform integrals of the type $\int \frac{dx}{\sqrt{1-x^2}}$
into integrals of rational functions. The substitution
$x = \frac{t^2-1}{t^2+1}$ gives $y = \sqrt{1-x^2} = \frac{2t}{t^2+1}$
(with positive $t$ if we take the square root to be positive), hence
$$ \int_{-1}^1 \frac{dx}{\sqrt{1-x^2}} 
       = 2\int_0^\infty \frac{dt}{t^2+1}. $$

Euler also showed how to use the parametrization of conics in 
solving $y^2 = x^3+1$: shifting the equation by $x = z-1$ give
$y^2 = z(z^2 - 3z + 3)$; the factors on the right have greatest
common divisor $\gcd(z,z^2 - 3z + 3) = \gcd(z,3)$, hence are coprime
or have gcd $3$. Unique factorization implies that the factors are 
either squares or three times squares (up to sign). The case $z = r^2$ 
leads, for example, to the quartic curve $y^2 = r^4 - 3r^2 + 3$. By 
studying these quartics arising from $y^2 = x^3 + 1$ Euler found all 
rational points on this elliptic curve (see \cite{LPep} for an exposition
of Euler's proof).

More generally, solving equations such as $y^2 = x(x^2 + ax + b)$
over the rationals (that is, finding rational points on the
elliptic curve $E: y^2 = x(x^2 + ax + b)$ with discriminant
$\Delta = 16b^2(a^2-4b)$ and its dual\footnote{The correct terminology
is "isogenous": there are isogenies $E \lra E'$ and $E' \lra E$ whose
composition is the map $E \lra E$ induced by multiplication by $2$.} 
curve $E': y^2 = x(x^2 + a' x + b')$ with $a' = -2a$ and $b' = a^2 - 4b$) 
inevitably leads to the problem of deciding whether the finitely many 
equations 
\begin{equation}\label{Etors}
b_1m^4 + am^2n^2 + b_2n^4 = e^2,
\end{equation} 
where $b_1b_2 = b$, have nontrivial solutions in the rationals. A 
necessary condition for solvability in the rationals is solvability 
in all completions of the rationals. This condition can be checked in 
finitely many steps thanks to the following result, which can also
be proved using the parametrization of conics (see \cite{AL}):

\begin{prop}
The equation $b_1m^4 + am^2n^2 + b_2n^4 = e^2$ has nontrivial 
solutions in the $p$-adic integers $\Z_p$ for all primes 
$p \nmid 2ab_1b_2(a^2 - 4b_1b_2)$.
\end{prop}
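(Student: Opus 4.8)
The plan is to reduce the equation modulo $p$, produce a nonsingular $\F_p$-point, and then lift it to $\Z_p$ by Hensel's lemma. Write $f(m,n) = b_1 m^4 + a m^2 n^2 + b_2 n^4$. The hypothesis $p \nmid 2 a b_1 b_2 (a^2 - 4 b_1 b_2)$ tells us that $p$ is odd, that $b_1$ and $b_2$ are units modulo $p$, and --- crucially --- that the dehomogenized quartic $\bar f(m) = b_1 m^4 + a m^2 + b_2 \in \F_p[m]$ is separable: a repeated root would have to come from $m = 0$ (excluded, since $\bar f(0) = b_2 \neq 0$) or from $2 b_1 m^2 + a = 0$, which forces $a^2 = 4 b_1 b_2$ and is also excluded. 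Consequently the affine curve $C: e^2 = \bar f(m)$ over $\F_p$ is smooth: the partials $2e$ and $-\bar f'(m)$ of $e^2 - \bar f(m)$ vanish simultaneously only where $e = 0$ and $\bar f'(m) = 0$, i.e. at a double root of $\bar f$, which separability rules out. Its smooth projective model $\tilde C$ is thus a curve of genus $1$.

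First I would dispose of the easy case. If $b_1$ is a square modulo $p$, then it is a square in $\Z_p$ by Hensel, and $(m,n,e) = (1, 0, \sqrt{b_1})$ is already a nontrivial solution over $\Z_p$, since $e^2 = b_1 = b_1 \cdot 1^4$. This settles every prime for which $b_1$ is a quadratic residue, with no further work. So suppose instead that $b_1$ is a nonsquare modulo $p$. Then $\tilde C$ has no points at infinity over $\F_p$, because the two branches at infinity of $e^2 = \bar f(m)$ are $\F_p$-rational precisely when the leading coefficient $b_1$ is a square.

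Now I invoke the one substantial input: a smooth projective curve of genus $1$ over a finite field always carries a rational point, since the Hasse bound gives $\#\tilde C(\F_p) \ge p + 1 - 2\sqrt{p} = (\sqrt p - 1)^2 > 0$. Hence $\tilde C(\F_p) \neq \emptyset$, and as there are no points at infinity in this case, there is an affine $\F_p$-point $(m_0, e_0)$ on $C$. Because $C$ is smooth, Hensel's lemma lifts it to a point $(m, e) \in \Z_p^2$ with $e^2 = \bar f(m)$: if $e_0 \neq 0$ one solves for $e$ (the $e$-derivative $2e_0$ is a unit), and if $e_0 = 0$ one solves for $m$ (smoothness forces $\bar f'(m_0) \neq 0$). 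Taking $n = 1$ then yields the desired nontrivial solution $(m, 1, e)$ in $\Z_p$.

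The main obstacle is precisely the one genuinely nonelementary ingredient, the existence of an $\F_p$-rational point on the reduced genus-$1$ curve; this is the Hasse bound, and I would simply cite it. If one wants to stay elementary, one replaces it by the Weil estimate $|\sum_{m \in \F_p} \chi(\bar f(m))| \le 3\sqrt{p}$ for the quadratic character $\chi$, which shows that $C$ has $p + \sum_m \chi(\bar f(m)) \ge p - 3\sqrt p$ affine points and hence at least one as soon as $p \ge 11$; the primes $p = 3, 5, 7$ then lie outside the range of this crude bound and must be disposed of by a direct but finite inspection of the residues of $a, b_1, b_2$. (The conic route alluded to in \cite{AL} instead passes to $b_1 u^2 + a uv + b_2 v^2 = w^2$, but recovering the quartic from it reintroduces a squareness condition on $u$, which is exactly what the genus-$1$ count above sidesteps.) Either way, the conceptual content is that good reduction, a point count in genus $1$, and Hensel lifting together force local solvability.
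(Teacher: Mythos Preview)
Your argument is correct, but it follows a genuinely different path from the one the paper sketches. The paper does not give a full proof here; it outlines (with details deferred to \cite{AL}) the conic route: show that $b_1x^2 + axy + b_2y^2 = z^2$ has an $\F_p$-point by an elementary count, parametrize the conic from that point, and then argue from the parametrization that some $(x,y)$ on the conic has both coordinates square in $\F_p$, so that $x = m^2$, $y = n^2$ yields the quartic point and Hensel lifts it. You instead treat the quartic $e^2 = b_1m^4 + am^2 + b_2$ directly as a smooth genus-$1$ curve, invoke the Hasse bound to guarantee an $\F_p$-point, and lift. The trade-off is exactly the one you flag in your last paragraph: the paper's approach stays elementary (no Weil-type estimate) at the price of the extra combinatorial step of forcing two conic coordinates to be squares simultaneously, while your approach is more conceptual and uniform but imports a nontrivial theorem. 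Both are valid; yours is arguably cleaner once one is willing to quote Hasse, and your verification that $a^2 \ne 4b_1b_2$ gives separability (hence smoothness and geometric irreducibility, so that the Hasse bound applies) is the right place to spend the hypothesis.
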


This means that for checking the solvability of 
$b_1m^4 + am^2n^2 + b_2n^4 = e^2$ in all completions
of $\Q$, we only have to look at $\R = \Q_\infty$ 
and the finitely many $p$-adic fields $\Q_p$ for primes 
$p \mid 2ab_1b_2(a^2 - 4b_1b_2)$.

The idea behind the proof is quite simple: first show that 
the conic  $b_1x^2 + axy + b_2y^2 = z^2$ has a nontrivial 
solution in $\F_p$; using this point, parametrize the conic
to find all of them, and then show that there is a solution 
$(x,y)$ for which $x = m^2$ and $y = n^2$ are both squares.

\medskip

Of course the line of proof could be simplified drastically 
if we were able to simply write down a parametrization of 
(\ref{Etors}). But there are two obstructions: for parametrizing 
a curve, we need a rational point to start with (whose existence 
we would like to prove in the first place). And even if we had 
such a rational point we would not be able to find such a 
parametrization: as we will see below, curves such as 
(\ref{Etors}) cannot be parametrized.

\section{Parametrizing Curves of Higher Degree.}

Conics are not the only curves that can be parametrized.
In fact, we can start with any "parametrization", say
$$ x = \frac{t(t^2+1)}{t^4+1}, \quad y = \frac{t(t^2-1)}{t^4+1} $$
and then find the equation of the corresponding plane algebraic 
curve by eliminating\footnote{This can be achieved easily by
using resultants. The {\tt pari} command 
\begin{center}
{\tt polresultant}(($t^4+1)*x-t*(t^2+1),(t^4+1)*y-t*(t^2-1),t$)
\end{center}
produces the equation $4(x^4 + 2x^2y^2 + y^4 - x^2 + y^2) = 0$.
See Prop. \ref{Plem} below.}
$t$ from these equations.

The cubics with a singularity at the origin have the 
form\footnote{after a suitable projective transformation}
$y^2 = x^3 + ax^2$; the cubic $y^2 = x^3 + x^2$, for example, 
has a singular point at the origin $O$, and using lines through 
$O$ we find the parametrization $x = t^2-1$, $y = t^3-t$.

\begin{figure}[!ht]
\begin{center}
\includegraphics[width=3cm,height=3cm]{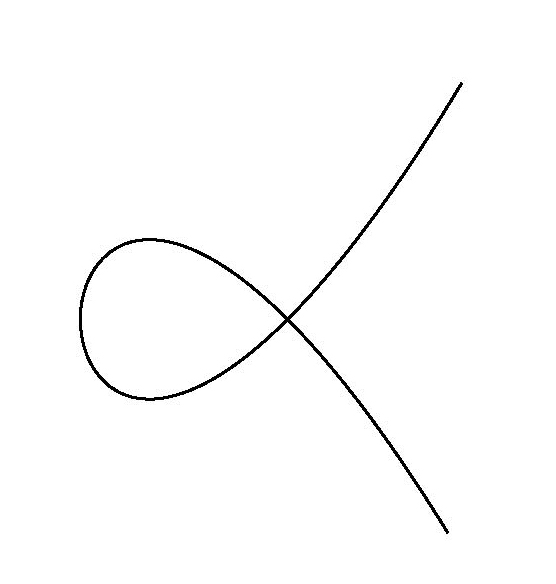}
\end{center}
\caption{The singular cubic $y^2 = x^3 + x^2$}
\label{FP3}
\end{figure}

Just as the parametrization of conics has applications to calculus,
so does the parametrization of cubics. For example, this technique 
allows us to compute the area of the region enclosed by the curve 
$y^2 = x^3 + x^2$: we have
$$ A = 2 \int_{-1}^0 y \, dx = \int_0^1 2t(t-t^3) \, dt = \frac{8}{15}. $$

Absilutely irreducible Curves with degree $n$ having a singularity 
of multiplicity $n-1$ can be parametrized by using a pencil of lines 
through the singularity. In fact, assume that the curve 
$\cC: F(X,Y) = 0$ is defined by an equation 
$F(X,Y) = F_n(X,Y) + F_{n-1}(X,Y)$,
where $F_m$ denotes a polynomial in which each term has degree $m$
(for example, $F(X,Y) = X^4 - X^2Y^2 + Y^3$ can be written as
$F(X,Y) = F_4(X,Y) + F_3(X,Y)$ with $F_4(X,Y) = X^4 - X^2Y^2$ and
$F_3(X,Y) = Y^3$). Plugging the line equation $Y = tX$ into $F(X,Y) = 0$
gives
$$  0 = F_n(X,tX) + F_{n-1}(X,tX) = X^n F_n(1,t) + X^{n-1}F_{n-1}(1,t). $$
The solution $X = 0$ gives the singular point; the nonzero solution
gives the following well-known\footnote{See e.g. Samuel 
\cite[Sect. 2.6]{Samuel}.} parametrization:

\begin{prop}
Let the curve $\cC: F(X,Y) = 0$ be defined by a polynomial
$F(X,Y) = F_n(X,Y) + F_{n-1}(X,Y)$, where $F_m$ denotes a
polynomial in which each term has degree $m$; then 
$$ X = - \frac{F_{n-1}(1,t)}{F_n(1,t)},  \qquad
   Y = - t \cdot \frac{F_{n-1}(1,t)}{F_n(1,t)} $$
is a parametrization of $\cC$.
\end{prop}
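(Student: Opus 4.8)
The plan is to make rigorous the computation already sketched just before the statement: intersect $\cC$ with the pencil of lines $Y = tX$ through the singular origin, and read off the single \emph{moving} point of intersection as a rational function of the slope $t$.

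First I would substitute $Y = tX$ into $F(X,Y) = F_n(X,Y) + F_{n-1}(X,Y) = 0$. Because each term of $F_m$ has total degree $m$, homogeneity gives $F_n(X,tX) = X^n F_n(1,t)$ and $F_{n-1}(X,tX) = X^{n-1} F_{n-1}(1,t)$, so the equation becomes
$$ 0 = X^{n-1}\bigl(X\,F_n(1,t) + F_{n-1}(1,t)\bigr). $$
The factor $X^{n-1}$ records the origin, a point of multiplicity $n-1$ on $\cC$; the remaining linear factor pins down the one other intersection of the line with the curve. Solving $X F_n(1,t) + F_{n-1}(1,t) = 0$ for $X$ (at slopes with $F_n(1,t) \neq 0$) yields $X = -F_{n-1}(1,t)/F_n(1,t)$, and $Y = tX$ produces the stated formula for $Y$.

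Next I would confirm that these formulas genuinely define a map onto $\cC$ and that it is generically bijective. Substituting the expressions for $X$ and $Y$ back into $F$ and using $Y = tX$ together with homogeneity recovers exactly the factored equation above, which vanishes; hence every point $P_t$ with $F_n(1,t) \neq 0$ lies on $\cC$. Conversely, given a point $(x,y)$ on $\cC$ with $x \neq 0$, set $t = y/x$, so that $Y = tX$ is precisely the line through the origin and $(x,y)$. That line meets $\cC$ at the origin with multiplicity $n-1$ and, by Bezout's Theorem, in exactly one further point; since $(x,y)$ is such a further point, it must coincide with $P_t$. Thus $t \mapsto P_t$ and $(x,y) \mapsto y/x$ are mutually inverse away from a finite set of values.

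The main point requiring care — indeed essentially the only one — is the bookkeeping of multiplicities: one must know that the singularity at the origin absorbs exactly $n-1$ of the $n$ intersections furnished by Bezout's Theorem, leaving precisely one moving point, so that the slope $t$ determines $P_t$ uniquely. This is where the hypotheses that $\cC$ is absolutely irreducible and that the multiplicity at the origin equals $n-1$ are used. The finitely many excluded values of $t$, namely the zeros of $F_n(1,t)$ (and $t = \infty$), correspond to the asymptotic directions of $\cC$, where the moving intersection escapes to infinity; away from them the map is a bijection between the affine line over the base field and $\cC$ minus the origin, which is exactly what it means for the displayed formulas to parametrize $\cC$.
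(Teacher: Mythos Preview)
Your proof is correct and follows exactly the approach the paper sketches just before the proposition: substitute $Y = tX$, use homogeneity to write $0 = X^{n-1}\bigl(X\,F_n(1,t) + F_{n-1}(1,t)\bigr)$, discard the factor $X^{n-1}$ coming from the singular origin, and solve the remaining linear factor for $X$. You go further than the paper by explicitly checking that $t \mapsto P_t$ is generically inverse to $(x,y) \mapsto y/x$ via Bezout, which the paper leaves implicit.
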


Finding a parametrization of the lemniscate $(X^2+Y^2)^2 = X^2-Y^2$ 
is more difficult. It is a straightforward exercise to compute the
singular points of the lemniscate: it has three double points, one at
the origin $[0:0:1]$ and two conjugate singular points $[1:\pm i:0]$ at 
infinity. Each of the points $[1:\pm i:0]$ lies on the projective 
closure of the circle $(x-a)^2 + (y-b)^2 = c$. The circles going 
through the origin $O$ whose tangent in $O$ is $Y=X$ have equation 
$X^2 + Y^2 + tX - tY = 0$.

\begin{figure}[!ht]
\begin{center}
\includegraphics[width=5.2cm,height=4cm]{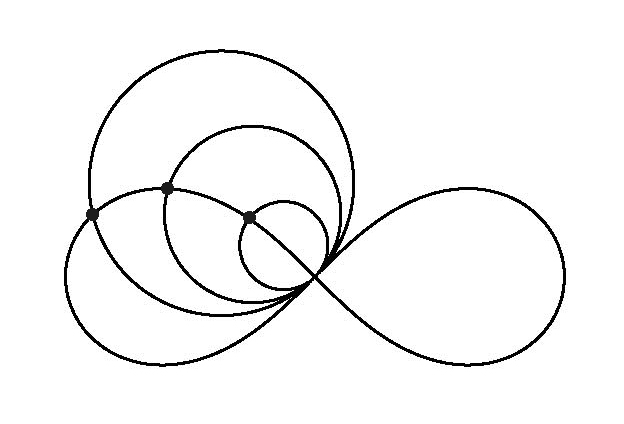}
\end{center}
\caption{The lemniscate and three circles ($t = 0.25$, $0.5$, $0.75$)}
\label{FP5}
\end{figure}

These circles intersect the lemniscate with multiplicity $2$ in 
$[1:\pm i:0]$ and with multiplicity $\ge 3$ in $O$, and since 
there are exactly $8 = 2 \cdot 4$ points of intersection by Bezout's
Theorem, the circles will intersect the lemniscate in exactly one other 
point, whose coordinates will be rational if $t$ is rational. Thus we can 
parametrize the lemniscate using the pencil $X^2 + Y^2 + tX - tY = 0$ 
of circles. Here are the calculations (we use affine coordinates): 
substitute $X^2+Y^2 = t(Y-X)$ in the equation of the lemniscate; this 
gives
$$ 0 = t^2(X-Y)^2 - (X^2-Y^2) = (X-Y)[t^2(X-Y) - (X+Y)] = 0. $$ 
The first factor leads to the known point $O$; setting the second
factor equal to $0$ yields $X(t^2-1) = Y(t^2+1)$. Solving for $Y$
and plugging this into the equation of the circle gives a 
quadratic equation in $X$ without constant term; the nonzero
solution gives the parametrization of the lemniscate:

\begin{prop}\label{Plem}
The lemniscate $(X^2+Y^2)^2 = X^2-Y^2$ admits the parametrization
$$ X(t) = \frac{t(t^2+1)}{t^4+1}, \quad Y(t) = \frac{t(t^2-1)}{t^4+1}. $$
\end{prop}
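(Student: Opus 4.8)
The plan is to complete the computation set up above, where the problem has already been reduced to two polynomial conditions: the residual linear relation $X(t^2-1) = Y(t^2+1)$ coming from the non-trivial factor, together with the equation $X^2 + Y^2 + tX - tY = 0$ of the circle in the pencil. Geometrically the Bezout count has already guaranteed that, for each value of $t$, the circle meets the lemniscate in exactly one point other than the base point $O$ and the two fixed points $[1:\pm i:0]$ at infinity; my task is then simply to write down the coordinates of that one variable point.

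First I would solve the linear relation for $Y$, obtaining $Y = X \cdot \frac{t^2-1}{t^2+1}$. Substituting this into the circle equation and clearing the denominator $(t^2+1)^2$ turns the circle equation into a quadratic in $X$ with vanishing constant term. Collecting terms gives
\[ X\big[\,2(t^4+1)\,X + 2t(t^2+1)\,\big] = 0, \]
so the two roots are the base point $X = 0$ (which returns $O$) and the nonzero root $X = -\frac{t(t^2+1)}{t^4+1}$. Back-substituting into $Y = X\frac{t^2-1}{t^2+1}$ then yields $Y = -\frac{t(t^2-1)}{t^4+1}$. Replacing the pencil parameter $t$ by $-t$ (a harmless reparametrization, since the family $X^2+Y^2+tX-tY=0$ is invariant as a set under $t \mapsto -t$) produces exactly the signs in the stated formulas.

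Because the variable intersection point was obtained by intersecting a circle with the lemniscate, it lies on the lemniscate by construction, so strictly speaking no verification is required. If one prefers a self-contained check, one substitutes the rational functions $X(t)$ and $Y(t)$ directly into $(X^2+Y^2)^2 - (X^2 - Y^2)$ and confirms that it reduces to the zero polynomial after clearing the common denominator $(t^4+1)^2$. I expect the only genuinely delicate point to be not the algebra, which is routine, but the justification that the pencil yields a single free intersection: this rests on verifying that every circle $X^2+Y^2+tX-tY=0$ meets the lemniscate with multiplicity $2$ at each of $[1:\pm i:0]$ and with multiplicity at least $3$ at $O$, so that the Bezout total $8 = 2\cdot 4$ leaves precisely one residual point. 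Once that local-intersection bookkeeping is in place, the explicit formulas follow mechanically and give a parametrization of the lemniscate defined over the base field.
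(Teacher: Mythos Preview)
Your proposal is correct and follows exactly the route sketched in the paper: substitute $X^2+Y^2 = t(Y-X)$ into the lemniscate to obtain the linear factor $X(t^2-1)=Y(t^2+1)$, then eliminate $Y$ in the circle equation to get a quadratic in $X$ with no constant term, whose nonzero root gives the parametrization. Your observation about the sign and the harmless reparametrization $t\mapsto -t$ is a detail the paper glosses over; otherwise the two arguments are identical.
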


In fact, any irreducible quartic $F(X,Y,Z) = 0$ with three double 
points can be parametrized (see \cite{Samuel}): move the singular 
points to $[0:0:1]$, $[0:1:0]$ and $[1:0:0]$ by a suitable projective
transformation; the fact that these points are singular implies that 
$F$ has the form
$$ F(X,Y,Z) = aX^2Y^2 + bY^2Z^2 + cX^2Z^2 + dXYZ^2 + eYZX^2 + fXZY^2. $$
Applying the quadratic transformation
$$ X = \frac1x, \quad Y = \frac1y, \quad Z = \frac1z $$
and clearing denominators we end up with a conic, which can easily be
parametrized by rational functions\footnote{If the conic does not have a 
rational point, we have to choose a point with coordinates in some
quadratic extension of $\Q$, and the resulting parametrization will
involve polynomials with coefficients from that field.}.

\section{Curves without Parametrization}

After having given lots of examples for parametrized families
of rational solutions of certain diophantine equations we now
turn to the problem of showing that certain curves cannot be 
parametrized by rational functions. An effective tool for doing
so is provided by the theorem of Stothers-Mason\footnote{For a 
particularly elegant proof see Snyder \cite{Sny}.}. 

For fields $K$, the polynomial ring $K[T]$ is Euclidean and therefore
a unique factorization domain; thus every nonzero polynomial $A \in K[T]$ 
can be written uniquely as a product $A = p_1(T)^{a_1} \cdots p_r(T)^{a_r}$  
of prime powers. We define the radical $\rad A$ of $A$ as the product 
$\rad A = p_1(T) \cdots p_r(T)$.

\begin{thm}[Stothers-Mason]\label{TStM}
Let $K$ be a field of characteristic $0$. If $A, B, C$ are nonzero 
polynomials in $K[T]$ with $A+B+C = 0$ and $\gcd(A,B,C) = 1$, then 
\begin{equation}\label{EIn}
\max \{\deg A,\deg B, \deg C\} \le \deg \rad ABC - 1. 
\end{equation}
\end{thm}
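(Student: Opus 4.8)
The plan is to prove the Stothers–Mason (ABC) theorem via the logarithmic derivative, which converts the multiplicative structure of factorizations into an additive structure that interacts cleanly with degrees. First I would introduce the Wronskian-type quantity. Dividing the relation $A+B+C=0$ by $C$ gives $\frac{A}{C}+\frac{B}{C}=-1$; differentiating kills the constant, so $\left(\frac{A}{C}\right)'+\left(\frac{B}{C}\right)'=0$. The key observation is that for any polynomial $P=\prod p_i^{a_i}$, the logarithmic derivative is $\frac{P'}{P}=\sum_i a_i\frac{p_i'}{p_i}$, whose common denominator is exactly $\rad P$. This is precisely the mechanism that feeds the radical into a degree bound.

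Next I would make this quantitative. Writing $\frac{A'}{A}-\frac{C'}{C}$ for the logarithmic derivative of $A/C$, and similarly for $B/C$, I would multiply the identity $\left(\frac{A}{C}\right)'=-\left(\frac{B}{C}\right)'$ through by $\frac{C}{A}$ (or rearrange to isolate a ratio) so as to obtain an expression of the form $\frac{A}{B}=\frac{N_B}{N_A}$, where $N_A=\frac{A'}{A}-\frac{C'}{C}$ and $N_B=\frac{B'}{B}-\frac{C'}{C}$ (up to sign and cross-multiplication). The point is that $N_A$ and $N_B$, after being multiplied by $\rad ABC$, become honest polynomials. Concretely, $(\rad ABC)\cdot N_A$ and $(\rad ABC)\cdot N_B$ lie in $K[T]$, and their degrees are at most $\deg\rad ABC-1$, because each term $\frac{p_i'}{p_i}$ contributes a numerator of degree one less than the factor it replaces in $\rad ABC$.

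Then I would exploit coprimality to finish. Since $\gcd(A,B,C)=1$, the fractions $\frac{A}{B}$, $\frac{B}{C}$, $\frac{A}{C}$ are in lowest terms pairwise, so in the equation $A\cdot\big((\rad ABC)N_B\big)=B\cdot\big((\rad ABC)N_A\big)$ the factor $A$ must divide $(\rad ABC)N_A$ and $B$ must divide $(\rad ABC)N_B$. Comparing degrees gives
\begin{equation*}
\deg A\le \deg\big((\rad ABC)N_A\big)\le \deg\rad ABC-1,
\end{equation*}
and symmetrically for $\deg B$; the bound for $\deg C$ follows either by symmetry (rewriting the relation as $C=-(A+B)$ and repeating) or from $\deg C\le\max\{\deg A,\deg B\}$ together with the above. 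Taking the maximum yields \eqref{EIn}. Here I must be careful that $N_A$ and $N_B$ are not identically zero, which is where characteristic $0$ enters: a nonconstant polynomial has nonzero derivative only in characteristic $0$, and this nonvanishing is what guarantees the divisibility argument is not vacuous.

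The main obstacle I anticipate is the bookkeeping in the two degenerate-looking cases: ensuring the relevant Wronskian or logarithmic-derivative expression does not vanish (handled by characteristic $0$ and by the fact that $A,B,C$ cannot all be constant, as their sum being zero with $\gcd=1$ forces at least one to be nonconstant), and correctly tracking that multiplying by $\rad ABC$ clears \emph{all} denominators simultaneously rather than just those coming from a single one of $A,B,C$. Once the logarithmic-derivative identity is set up so that a single $\rad ABC$ serves as the universal denominator, the degree count is routine and the coprimality step is purely formal.
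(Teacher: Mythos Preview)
The paper does not actually prove this theorem; it merely states it and points the reader to Snyder's article in a footnote. So there is no ``paper's own proof'' to compare against. That said, your logarithmic-derivative argument is one of the standard proofs and is essentially sound; it is a close cousin of Snyder's, which instead works directly with the Wronskian $W(A,B)=AB'-A'B$, notes that $W(A,B)=W(B,C)=W(C,A)$ when $A+B+C=0$, and shows that $ABC/\rad(ABC)$ divides this common Wronskian. Your version packages the same divisibility from the other side (multiply up by $\rad ABC$ rather than divide down by it), which is fine.

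Two things to clean up. First, your assertion that ``their sum being zero with $\gcd=1$ forces at least one to be nonconstant'' is simply false: take $A=1$, $B=1$, $C=-2$. In fact the theorem as stated in the paper is missing the standard hypothesis that $A,B,C$ are not all constant; without it the inequality \eqref{EIn} fails in this trivial case, and your nonvanishing argument for $N_A,N_B$ needs exactly that hypothesis (if $A/C$ is constant then $N_A=0$). You should assume it explicitly rather than try to derive it. Second, there is a small bookkeeping slip: from $(A/C)N_A+(B/C)N_B=0$ one gets $A\cdot N_A=-B\cdot N_B$, so after clearing denominators the divisibility reads $A\mid(\rad ABC)\,N_B$ and $B\mid(\rad ABC)\,N_A$, the reverse of what you wrote. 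This does not affect the final bound, since both $(\rad ABC)N_A$ and $(\rad ABC)N_B$ satisfy the same degree estimate $\le\deg\rad ABC-1$, but it is worth getting the indexing right. (Your claim that $A,B,C$ are \emph{pairwise} coprime is, however, correct: any common factor of two of them divides the third via $A+B+C=0$.)
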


As a corollary, we find that the curve $x^n + y^n = 1$ cannot be
parametrized for exponents $n > 2$, which is called Fermat's Last 
Theorem for polynomials:

\begin{cor}
The Fermat curve $x^n + y^n = 1$ does not have a nontrivial
$\C(t)$-rational point for $n > 2$.
\end{cor}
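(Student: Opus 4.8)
The plan is to reduce the existence of a nontrivial $\C(t)$-rational point to a polynomial identity and then feed that identity into Theorem \ref{TStM}. First I would clear denominators: a $\C(t)$-rational point on $x^n + y^n = 1$ consists of rational functions $x, y \in \C(t)$, which I write over a common denominator as $x = A/C$ and $y = B/C$ with $A, B, C \in \C[T]$ and $C \neq 0$. Dividing out any common factor, I may assume $\gcd(A,B,C) = 1$. Substituting into the curve equation and clearing $C^n$ turns it into
$$ A^n + B^n = C^n. $$
Here \emph{nontrivial} should mean that $x, y$ are not both constant; equivalently, that at least one of $A, B, C$ has positive degree, since if all three were constant then $x$ and $y$ would be constant as well.

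The heart of the argument is an application of Stothers-Mason to the triple $A^n$, $B^n$, $-C^n$, whose sum vanishes. Since $\gcd(A,B,C) = 1$, no irreducible polynomial divides all three of $A, B, C$, hence none divides all three $n$-th powers, so $\gcd(A^n, B^n, -C^n) = 1$ and the hypothesis of Theorem \ref{TStM} is met. The crucial observation is that passing to $n$-th powers leaves the radical unchanged, $\rad(A^n B^n C^n) = \rad(ABC)$, because the radical records only which irreducible factors occur and not their multiplicities. Theorem \ref{TStM} therefore gives
$$ n \cdot \max\{\deg A, \deg B, \deg C\} = \max\{\deg A^n, \deg B^n, \deg C^n\} \le \deg \rad(ABC) - 1. $$
Writing $d = \max\{\deg A, \deg B, \deg C\}$ and using $\deg \rad(ABC) \le \deg A + \deg B + \deg C \le 3d$, this collapses to $nd \le 3d - 1$. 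If $d \ge 1$ this forces $n < 3$; hence for $n > 2$ we must have $d = 0$, so $A, B, C$ are constant and the point is trivial.

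Before concluding I would dispose of the degenerate cases built into the reduction, where the radical identity requires all three polynomials to be nonzero. Since $C$ is a denominator it is nonzero by construction; if $A = 0$ then $x = 0$ and the equation becomes $B^n = C^n$, forcing $B/C$ to be a constant $n$-th root of unity, so $y$ is constant, and symmetrically $B = 0$ makes $x$ constant. In each case the resulting point is trivial, so no counterexample hides there. I expect the only real subtlety to be precisely this bookkeeping---pinning down what ``nontrivial'' excludes and checking the degenerate branches---while the substantive step, namely that the exponent $n$ survives on the left of the Stothers-Mason inequality but is erased from the right by the radical, is exactly where the hypothesis $n > 2$ enters and the genus obstruction becomes visible.
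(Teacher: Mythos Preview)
Your argument is correct and follows essentially the same route as the paper: clear denominators to obtain $A^n + B^n = C^n$ in $\C[T]$, apply Stothers--Mason using $\rad(A^nB^nC^n) = \rad(ABC)$, and compare degrees to force $n < 3$. The only cosmetic difference is that the paper bounds each of $\deg A$, $\deg B$, $\deg C$ separately and adds the three inequalities, whereas you work directly with the maximum; your treatment of the degenerate cases $A = 0$ or $B = 0$ is in fact more careful than the paper's.
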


\begin{proof}
Assume that the Fermat curve admits a rational parametrization by
nonconstant polynomials with coefficients in $\C$. Clearing 
denominators we find polynomials $x, y, z \in K[T]$ with 
$x(T)^n + y(T)^n - z(T)^n = 0$, which we may assume to be pairwise 
coprime. By Mason's Theorem, we have
$$ \deg x(T)^n \le \deg \rad (xyz) - 1 = s-1, $$ 
where $s$ is the number of distinct roots of $xyz$. Thus 
$s \le \deg xyz$, and therefore 
$$ n \deg x = \deg x^n \le \deg x + \deg y + \deg z - 1. $$
The same inequality holds for $y$ and $z$; adding them gives
$$ n (\deg x + \deg y + \deg z) \le 3(\deg x + \deg y + \deg z) - 3, $$
which implies that $n < 3$.
\end{proof}

Although Fermat's Last Theorem for polynomials follows quite easily,
from Mason's Theorem, it is not clear how to apply it to general 
algebraic curves. On the other hand it is possible to derive quite
strong conditions on e.g. the solvability of polynomial Pell equations:
Consider the equation $X^2 - DY^2 = 1$, where $D \in \C[T]$ 
is a nonconstant polynomial of degree $\deg D > 0$. 
Let $n(D)$ denote the number of distinct zeros of $D$. Then we claim

\begin{prop}
Let $D \in \C[T]$ be a polynomial. If  the equation $X^2 - DY^2 = 1$ 
has a nontrivial solution (i.e. with $Y \ne 0$), then 
$\deg D \le 2 n(D) - 2$.
\end{prop}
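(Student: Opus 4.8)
The plan is to feed the Pell relation into the Stothers--Mason theorem (Theorem \ref{TStM}) after rewriting it as a three-term identity. Suppose $(X,Y)$ is a nontrivial solution, so that $X^2 - DY^2 = 1$ with $Y \neq 0$, and recall we are assuming $\deg D > 0$. I would set $A = X^2$, $B = -DY^2$ and $C = -1$; then $A + B + C = 0$, and because $C$ is a nonzero constant the hypothesis $\gcd(A,B,C) = 1$ holds automatically. First I would check that all three polynomials are genuinely nonzero and, more importantly, that $X$ is nonconstant: if $X$ were constant, then $DY^2 = X^2 - 1$ would be a constant, which is impossible since $\deg D > 0$ and $Y \neq 0$. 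In particular $\deg A = 2\deg X \geq 2$.

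Next I would pin down the degrees. Since $A + B = -C$ has degree $0$, the leading terms of $X^2$ and $DY^2$ must cancel, forcing $2\deg X = \deg D + 2\deg Y$; hence $\deg A = \deg B = 2\deg X$ and $\max\{\deg A, \deg B, \deg C\} = 2\deg X$. For the right-hand side of (\ref{EIn}) I would observe that $ABC = -X^2 D Y^2$, so $\rad ABC = \rad(XYD)$, since passing to the radical discards all exponents. Over $\C$ every irreducible factor is linear, so $\deg \rad(XYD)$ equals the number of distinct roots of $XYD$, which is at most $\deg \rad X + \deg \rad Y + \deg \rad D \le \deg X + \deg Y + n(D)$.

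Combining these, Theorem \ref{TStM} gives $2\deg X \le \deg X + \deg Y + n(D) - 1$, that is, $\deg X \le \deg Y + n(D) - 1$. Substituting the degree relation $\deg X = \frac{1}{2}\deg D + \deg Y$ then yields $\frac{1}{2}\deg D \le n(D) - 1$, which is precisely $\deg D \le 2 n(D) - 2$.

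I expect the only real subtlety to lie in the radical bookkeeping: correctly reducing $\rad(X^2 D Y^2)$ to $\rad(XYD)$ and justifying the additive estimate on $\deg\rad(XYD)$, together with the leading-term cancellation argument that makes $\deg A$ and $\deg B$ coincide. Everything else---the coprimality and the nonconstancy of $X$---is immediate once one notices that $C$ is a unit. The genuinely clever point is simply the choice of the decomposition $X^2 + (-DY^2) + (-1) = 0$, which recasts a Pell equation as an $abc$-type relation to which Mason's theorem applies directly.
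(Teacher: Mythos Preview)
Your proof is correct and follows essentially the same route as the paper's: both apply Theorem~\ref{TStM} to the decomposition $X^2 + (-DY^2) + (-1) = 0$ and bound $\deg\rad(X^2DY^2)$ by $\deg X + \deg Y + n(D)$. The only cosmetic difference is that the paper records the Mason inequality for both $\deg X^2$ and $\deg DY^2$ and adds them, whereas you apply it once to $\deg X^2$ and then invoke the degree relation $2\deg X = \deg D + 2\deg Y$ obtained from the leading-term cancellation; since that relation makes the two Mason inequalities identical, the two arguments are the same computation written two ways.
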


\begin{proof}
Applying Thm. \ref{TStM} to $X^2 - DY^2 - 1 = 0$ we get
\begin{align*}
 2 \deg X & = \deg X^2 \le \deg X + \deg Y + n(D) - 1, \\
 2 \deg Y + \deg D & = \deg DY^2 \le \deg X + \deg Y + n(D) - 1.
\end{align*} 
Adding these inequalities shows that if $X^2 - DY^2 - 1 = 0$ 
has a nonzero solution, then $\deg D \le 2 n(D) - 2$. Conversely,
if $\deg D \ge 2 n(D) - 1$, the equation has no nonzero solution.
\end{proof}

\section{Kapferer's Proof}

We have already seen that irreducible conics with a rational point
can always be parametrized. For irreducible cubics, the situation
is also clear: if $\cC$ is a singular irreducible cubic, then it
has a unique singular point $P$, and by intersecting the lines
through $P$ with $\cC$ it is then easy to find a parametrization.
Smooth cubics, on the other hand, cannot be parametrized by rational
functions, as we will see below. For quartics $\cC$, there are already
a lot of cases: an irreducible quartic can have at most three singular 
points (if there are four of them, pick a fifth point on the quartic; 
the conic through these five points then intersects the quartic with
multiplicity $\ge 9$, which implies by Bezout's Theorem that the 
conic is a component of the quartic); if $\cC$ has three double 
points, then it can be parametrized by looking at the family of 
conics going through the three singular points and some fixed 
smooth point on the quartic; each such conic intersects the 
quartic in exactly one other point, which gives the required 
parametrization.

Clebsch showed\footnote{See his book \cite{Clgen} as well as 
Shafarevich's article on the occasion of Clebsch'S 150th 
birthday \cite{ShaCl}.}
that the genus of a plane algebraic curve can be computed as 
follows: using birational transformations (the genus is a 
birational invariant), transform the curve into a curve whose 
only singularities are simple nodes or cusps. If $\cC$ is an 
irreducible plane algebraic curve with at most double points 
as singularities, and if $d$ denotes the degree of $\cC$ and 
$r$ the number of double points, then
$$ g = \frac{(d-1)(d-2)}2 - r$$
is called the genus\footnote{It is not difficult to show that a curve 
with degree $d$ can have at most $\frac{(d-1)(d-2)}2$ double points.} of $\cC$.

Clebsch \cite{Clrat} then proved that a plane algebraic curve can 
be parametrized by rational functions\footnote{This result was later
proved in a more number theoretical context by Hilbert \& Hurwitz 
\cite{HH} as well as by Poincar\'e \cite{Poi}. Observe that while
algebraic geometers might be content with a parametrization by
rational functions whose coefficients lie in some algebraically
closed field such as $\C$, number theorists would like to have 
parametrizations for which the coefficients of the rational functions
lie in fields of small degree, preferably in the field of rational
numbers.} if and only if its 
genus is $0$; he also showed in \cite{Clell} that curves of genus $1$ 
can be parametrized by elliptic functions\footnote{This result later
gave elliptic curves their name. The fact that the rational points on
elliptic curves form a group was pointed out by Juel \cite{Juel} and
Mordell \cite{Mord1}. Clebsch \cite{ClWei} already pointed out that
if $P_1$, $P_2$, $P_3$ are collinear points on a cubic curve parametrized
by an elliptic function $f$, then the corresponding arguments $z_1$, 
$z_2$, $z_3$ of $f$ have the property that $z_1 + z_2 + z_3$ is
constant up to multiples of the periods of $f$.}. 
Kapferer\footnote{Heinrich Kapferer was 
born on October 28, 1888 in Donaueschingen (Bavaria). He studied 
at the University of Freiburg, with a short visit to Munich 
for one semester. His Ph.D. thesis  in Freiburg (1917) was supervised 
by Stickelberger. Kapferer worked as a teacher from 1914 to 1924; 
in 1922, he took up his studies at the Universities of G\"ottingen 
and Freiburg and received his habilitation (the right to teach at
a university -- venia legendi) in 1926. In 1932, Kapferer got an 
appointment as a professor at Freiburg, but his position was cancelled 
in 1937 despite support by S\"uss and Hasse.
Kapferer worked at the University library until 1941, when he was
forced to ''take a leave''. He died on January 5, 1984 in Freiburg.} 
\cite{Kap} observed that the following special case of Clebsch's 
theorem on the rational parametrization of curves can be proved quite
easily\footnote{Shafarevich's proof in  \cite{Sha} that the Fermat 
curve $X^n + Y^n = Z^n$ for $n > 2$ cannot be parametrized is 
nothing but Kapferer's proof in this special case.}:

\begin{thm}
Let $K$ be a field with characteristic $0$. 
Let $\cC$ be a nonsingular curve defined by the homogeneous
polynomial $F(X,Y,Z) \in K[X,Y,Z]$ of degree $n \ge 1$. If $\cC$ 
can be parametrized, that is, if there exist nonconstant coprime 
homogeneous polynomials $f, g, h \in K[U,V]$ of degree $m \ge 1$ 
such that $F(f,g,h) = 0$ identically, then $n \le 2$.
\end{thm}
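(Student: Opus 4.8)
The plan is to imitate the Stothers--Mason / Shafarevich argument that settles the Fermat curve, but to replace the explicit power factorizations available for $X^n+Y^n-Z^n$ by relations obtained purely from differentiating the identity $F(f,g,h)=0$. First I would set $A=F_X(f,g,h)$, $B=F_Y(f,g,h)$, $C=F_Z(f,g,h)$, which are forms in $K[U,V]$ homogeneous of degree $(n-1)m$. Differentiating $F(f,g,h)\equiv 0$ with respect to $U$ and to $V$ and using the chain rule produces the two linear relations
$$ A f_U + B g_U + C h_U = 0, \qquad A f_V + B g_V + C h_V = 0, $$
so $(A,B,C)$ lies in the kernel of the Jacobian matrix of $(f,g,h)$. (Euler's identity for $F$ gives a third relation $fA+gB+hC=0$, but it is implied by these two and need not be used.)

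Next I would extract the arithmetic input from nonsingularity. For every $(u:v)\in\bP^1$ the point $(f:g:h)(u,v)$ is well defined, because $\gcd(f,g,h)=1$, and lies on $\cC$, because $F(f,g,h)=0$; since $\cC$ is nonsingular, the projective singularity criterion stated above (applied over $\bar K$ if needed) guarantees that $F_X,F_Y,F_Z$ do not all vanish there. Hence $A,B,C$ have no common zero, that is $\gcd(A,B,C)=1$; in particular $(A,B,C)\neq(0,0,0)$.

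The heart of the argument is to compare $(A,B,C)$ with the $2\times 2$ minors of the Jacobian matrix, namely the forms
$$ W_1 = g_Uh_V-g_Vh_U, \qquad W_2 = h_Uf_V-h_Vf_U, \qquad W_3 = f_Ug_V-f_Vg_U, $$
each homogeneous of degree $2(m-1)$. The cross-product vector $(W_1,W_2,W_3)$ is orthogonal to both rows of the Jacobian matrix, so it too lies in the kernel and is therefore proportional to $(A,B,C)$ over $K(U,V)$. Writing this proportionality in lowest terms and using $\gcd(A,B,C)=1$, I would clear denominators to upgrade it to a genuine polynomial identity $(W_1,W_2,W_3)=Q\cdot(A,B,C)$ for a single $Q\in K[U,V]$. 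I also need that the $W_i$ are not all identically zero: otherwise the Jacobian would have rank $\le 1$ everywhere, forcing $f,g,h$ to be pairwise proportional, which with $\gcd(f,g,h)=1$ would make them constant, contradicting nonconstancy. (Equivalently, the nonconstant morphism $\bP^1\to\bP^2$, $(U:V)\mapsto(f:g:h)$, cannot have identically vanishing derivative; this is where characteristic $0$ is used.) Hence $Q\neq 0$.

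Finally I would count degrees. Choosing an index with $W_i\neq 0$, the relation $W_i=Q\cdot(\,\text{the }i\text{th of }A,B,C\,)$ gives $2(m-1)=\deg Q+(n-1)m$, and $\deg Q\ge 0$ yields $2(m-1)\ge(n-1)m$, i.e.\ $(3-n)m\ge 2$. Since $m\ge 1$, this forces $n<3$, so $n\le 2$. The step I expect to be the main obstacle is precisely the passage from proportionality over the function field to the polynomial identity with a single factor $Q$: this is exactly where the coprimality $\gcd(A,B,C)=1$ coming from nonsingularity must be invoked, and it is the structural substitute for the explicit factorizations $F_X=nX^{n-1}$ that make the Fermat case so transparent.
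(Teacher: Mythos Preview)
Your proposal is correct and is essentially Kapferer's proof as given in the paper: both differentiate $F(f,g,h)=0$ to obtain the two linear relations, use nonsingularity to deduce $\gcd\bigl(F_X(f,g,h),F_Y(f,g,h),F_Z(f,g,h)\bigr)=1$, show the Jacobian matrix has rank~$2$ so that $(A,B,C)$ and the vector of $2\times 2$ minors span the same one-dimensional kernel, clear denominators via the gcd condition to get a single polynomial factor $Q$, and finish with the degree comparison $2(m-1)\ge (n-1)m$. The only cosmetic difference is that the paper establishes the rank-$2$ claim by an explicit computation with Euler's identity (deducing $f_U=f_V=0$ from the vanishing of the minors and the coprimality), whereas you phrase the same step geometrically as the nonvanishing of the differential of the morphism $\bP^1\to\bP^2$ in characteristic~$0$.
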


In particular, elliptic curves (smooth cubic curves with at least one
rational point) cannot be parametrized with rational functions. 

\begin{proof}
Assume that there is a parametrization $F(f,g,h) = 0$ as described 
in the statement of the Theorem. Taking the derivatives of this 
equation with respect to $U$ and $V$ we find
\begin{align*}
 F_X(f,g,h) f_U + F_Y(f,g,h) g_U + F_Z(f,g,h) h_U & = 0, \\
 F_X(f,g,h) f_V + F_Y(f,g,h) g_V + F_Z(f,g,h) h_V & = 0.
\end{align*}
These equations can be written in matrix form
\begin{equation}\label{EMKa}
 \bigg(\begin{matrix} f_U & g_U & h_U \\ f_V & g_V & h_V 
        \end{matrix} \bigg) 
   \left( \begin{matrix} F_X(f,g,h) \\ F_Y(f,g,h) \\ F_Z(f,g,h)
          \end{matrix} \right) = 0. 
\end{equation}
We now start with two little lemmas:

\begin{lem}
The $2 \times 3$-matrix in (\ref{EMKa}) has rank $2$.
\end{lem}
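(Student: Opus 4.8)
The plan is to reinterpret ``rank $2$'' as the statement that at least one of the three $2\times 2$ minors
$$ f_Ug_V - f_Vg_U, \qquad f_Uh_V - f_Vh_U, \qquad g_Uh_V - g_Vh_U $$
is not identically zero, and to argue by contradiction: I would assume the rank is $\le 1$, so that all three of these Jacobians vanish identically, and then exhibit a common factor of $f$, $g$, $h$.

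The technical core is the following claim, which I would isolate and prove first: if $a$ and $b$ are homogeneous polynomials of the same degree $m\ge 1$ with $a_Ub_V - a_Vb_U = 0$, then $a$ and $b$ are proportional over $K$. To prove it I would feed the vanishing Jacobian into Euler's identities $Ua_U + Va_V = ma$ and $Ub_U + Vb_V = mb$. Forming the combination $b_V\cdot(\text{Euler for }a) - a_V\cdot(\text{Euler for }b)$, the left-hand side collapses to $U(a_Ub_V - a_Vb_U)=0$, leaving $m(a\,b_V - b\,a_V)=0$; the parallel combination using $b_U$ and $a_U$ yields $m(a\,b_U - b\,a_U)=0$. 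Since $\operatorname{char} K = 0$ we may cancel $m$, and since $b\ne 0$ (it is nonconstant) both partial derivatives of $a/b$ vanish, so $a/b$ is constant and $a = cb$ for some $c\in K$. This is precisely the step where the hypothesis $\operatorname{char} K = 0$ is indispensable: in characteristic $p$ a rational function like $U^{p}$ has zero derivative without being constant, so the conclusion would fail.

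Applying this claim to each of the three vanishing minors shows that $f$, $g$, $h$ are pairwise proportional, say $f = c_1 h$ and $g = c_2 h$ with $c_1,c_2\in K$. Then $\gcd(f,g,h)$ equals $h$ up to a unit, and since $h$ is a nonconstant homogeneous polynomial of degree $m\ge 1$ this contradicts $\gcd(f,g,h)=1$, forcing the rank to be $2$. The point I expect to be the main obstacle is a subtlety in the coprimality hypothesis: we are only given $\gcd(f,g,h)=1$, not pairwise coprimality. Consequently the vanishing of a single minor, say $f_Ug_V - f_Vg_U$, produces $f = cg$ but is by itself harmless, since $f$ and $g$ may legitimately share a factor absent from $h$. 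It is only when all three minors vanish at once -- that is, under the full rank-$\le 1$ assumption -- that the three proportionalities combine to produce a genuine common factor of all of $f$, $g$, $h$, which is what finally contradicts $\gcd(f,g,h)=1$.
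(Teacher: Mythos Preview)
Your proof is correct and shares its core mechanism with the paper's: both argue by contradiction, assume all three $2\times 2$ minors vanish, and feed the vanishing of $f_Ug_V-f_Vg_U$ into Euler's identities to obtain $fg_V-gf_V=0$ (and the analogous relation with $U$). From this point the two arguments finish differently. The paper invokes \emph{pairwise} coprimality of $f$ and $g$ --- justified in a side remark using the equation $F(f,g,h)=0$ --- to deduce $f\mid f_V$, hence $f_V=0$, and likewise $f_U=0$, so that $f$ is constant, contradicting $m\ge 1$. You instead read $fg_V-gf_V=0$ and $fg_U-gf_U=0$ together as the vanishing of both partials of $f/g$, conclude $f=cg$, and only at the very end combine the three proportionality relations to exhibit a common factor of all of $f,g,h$. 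Your route is a little cleaner in that it works directly from $\gcd(f,g,h)=1$ without the separate reduction to pairwise coprimality; the paper's route reaches the contradiction from a single minor, but at the cost of that extra reduction.
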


If not, then its three columns are linearly dependent, hence its 
three minors vanish. But $f_U g_V - f_V g_U = 0$, together with 
Euler's identities\footnote{By linearity, it is sufficient to prove
these equations for monomials $f(U,V) = U^rV^s$; in this case, the
identities are easily checked.} 
$$ mf = Uf_U + Vf_V, \qquad mg = Ug_U + Vg_V, $$
implies $U (f_U g_V - f_V g_U) = m(f \cdot g_V - g \cdot f_V) = 0$,
hence $f \cdot g_V - g \cdot f_V = 0$. Since $f$ and $g$ were
assumed to be coprime\footnote{In fact, any common divisor of
$f$ and $g$ can either be cancelled (if $F$ does not contain
any monomial of the form $Z^n$) or it divides $h$, contradicting 
the assumption that $f, g, h$ be coprime.}, this implies 
$f \mid f_V$ and hence $f_V = 0$ (the same argument, by the way, 
occurs in Snyder's proof of Mason's theorem). Similarly, it 
follows that $f_U = 0$. Since $K$ has characteristic $0$, this is 
only possible if $\deg f \le 0$, which contradicts our assumptions. 

\begin{lem}
The polynomials $F_X(f,g,h)$, $F_Y(f,g,h)$ and $F_Z(f,g,h)$ are coprime.
\end{lem}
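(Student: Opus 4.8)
The plan is to argue by contradiction, using the nonsingularity of $\cC$ together with Theorem 1, which identifies the singular points of $\cC$ with the common zeros of $F_X$, $F_Y$ and $F_Z$. Note first that since $F$ is homogeneous of degree $n$, each partial $F_X$, $F_Y$, $F_Z$ is homogeneous of degree $n-1$, so the three compositions $F_X(f,g,h)$, $F_Y(f,g,h)$, $F_Z(f,g,h)$ are homogeneous elements of $K[U,V]$ of degree $(n-1)m$, and asking whether they are coprime is meaningful.

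So suppose they are \emph{not} coprime. Because $K[U,V]$ is a unique factorization domain, a nontrivial common factor of homogeneous polynomials may be taken homogeneous; and over the algebraic closure $\overline K$ every nonconstant homogeneous polynomial in two variables splits into linear forms. Hence there is a linear form, equivalently a point $(u_0:v_0) \in \bP^1(\overline K)$, at which all three polynomials vanish simultaneously.

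I would then consider the point $P = [\,f(u_0,v_0) : g(u_0,v_0) : h(u_0,v_0)\,]$. The crucial check is that this is a genuine point of $\bP^2(\overline K)$, i.e.\ that its three coordinates do not all vanish: if they did, the linear form $v_0 U - u_0 V$ would divide each of $f$, $g$, $h$, contradicting the hypothesis that $f$, $g$, $h$ are coprime. This is exactly where coprimality of the parametrizing polynomials is used, and it is the one step that needs care. Granting it, $P$ lies on $\cC$ because the identity $F(f,g,h)=0$ forces $F(P)=0$, while $F_X(P)=F_Y(P)=F_Z(P)=0$ holds by the choice of $(u_0:v_0)$.

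Applying Theorem 1 over $\overline K$, the point $P$ is then a singular point of $\cC$, contradicting the assumption that $\cC$ is nonsingular. Therefore the three polynomials $F_X(f,g,h)$, $F_Y(f,g,h)$, $F_Z(f,g,h)$ are coprime. The main obstacle is not any computation but the bookkeeping around $\overline K$: one must pass to the algebraic closure both to produce a common zero of the three polynomials and to invoke the singularity criterion of Theorem 1, and one must verify via coprimality of $f,g,h$ that this common zero yields an honest point of the projective plane rather than the forbidden triple $(0:0:0)$.
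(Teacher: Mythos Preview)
Your argument is correct and is essentially identical to the paper's own proof: both assume a common factor exists, pass to the algebraic closure to extract a common linear form (a point of $\bP^1(\overline K)$), evaluate $f,g,h$ there, use coprimality of $f,g,h$ to ensure the resulting triple is not $(0,0,0)$, and then invoke the vanishing of $F_X,F_Y,F_Z$ at that point to contradict nonsingularity via Theorem~1. Your version is slightly more careful in treating the linear form projectively as $v_0U-u_0V$ rather than writing it as $U-cV$, but the substance is the same.
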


In fact, assume not; then, over some algebraic closure of $K$ they 
will have at least a linear factor $U - cV$ in common. Setting
$x = f(c,1)$, $y = g(c,1)$ and $z = h(c,1)$ we have 
$(x,y,z) \ne (0,0,0)$: otherwise $f, g, h$ would have a common factor
$U - cV$ contrary to our assumptions. Moreover, we have 
$F_X(x,y,z) = F_Y(x,y,z) = F_Z(x,y,z) = 0$, and this implies
that $(x:y:z)$ is a singular point on $\cC$. 

\medskip

Now we can complete the proof of the theorem. 
We now know that the solution space of the linear system of equations
\begin{equation}\label{EMKb}
  \bigg(\begin{matrix} f_U & g_U & h_U \\ f_V & g_V & h_V 
        \end{matrix} \bigg) 
   \left( \begin{matrix} P \\ Q \\ R \end{matrix} \right) = 0 
\end{equation}
in the $3$-dimensional $K(U,V)$-vector space has dimension $1$. 
Developing the determinants of the matrices 
$$ M = \left(\begin{matrix} 
        f_U & g_U & h_U \\ f_U & g_U & h_U \\  f_V & g_V & h_V 
        \end{matrix} \right) \quad \text{and} \quad
   N =  \left(\begin{matrix} 
        f_V & g_V & h_V \\ f_U & g_U & h_U  \\ f_V & g_V & h_V 
        \end{matrix} \right)   $$
with respect to the first line shows that
$0 = \det M = f_U P + g_U Q + h_U R$ and $0 = \det N = f_V P + g_V Q + h_V R$, 
where
$$ P =   \bigg| \begin{matrix} g_U & h_U \\ g_V & h_V  \end{matrix} \bigg|, 
  \qquad
   Q = - \bigg| \begin{matrix} f_U & h_U \\ f_V & h_V  \end{matrix} \bigg|, 
  \qquad
   R =  \bigg| \begin{matrix} f_U & g_U \\ f_V & g_V  \end{matrix} \bigg|. $$ 
This provides us with a solution of (\ref{EMKb}).
Since $P = F_X(f,g,h)$, $Q = F_Y(f,g,h)$ and $R = F_Z(f,g,h)$ is another
solution, the fact that the solution space has dimension $1$ implies that 
these solutions are linearly dependent over $K(U,V)$. Thus 
there exists a rational function $\frac{p(U,V)}{q(U,V)}$ with coprime 
polynomials 
$p, q \in K[U,V]$ such that 
\begin{align*}
   q(U,V) (h_U g_V - h_V g_U) & = p(U,V) F_X(f,g,h), \\
   q(U,V) (f_U h_V - f_V h_U) & = p(U,V) F_Y(f,g,h), \\
   q(U,V) (g_U f_V - g_V f_U) & = p(U,V) F_Z(f,g,h).
\end{align*}
These equations imply that $q(U,V) = q$ is a constant: in fact, each 
irreducible factor of $q$ must divide $F_X(f,g,h)$, $F_Y(f,g,h)$, 
and $F_Z(f,g,h)$, hence their gcd, which is trivial by our second claim. 

Next we compute degrees; on the left hand side we get
$$ \deg q(h_Ug_V - h_Vg_U) = \deg (h_Ug_V - h_Vg_U) 
                          \le \deg h_Ug_V = 2(m-1). $$ 
On the right hand side, we find
$$ \deg p \cdot F_X(f,g,h) \ge \deg F_X(f,g,h) = (n-1)m. $$ 
Comparing degrees now shows that $2m-2 \ge m(n-1)$, which 
implies $-2 \ge m(n-3)$ and therefore $n < 3$.
\end{proof}

This is not the best possible result that can be achieved by this
line of attack. It is easy to prove that irreducible curves with 
a single double point cannot be parametrized by rational functions
whenever the curve has degree $\ge 4$. Gradually generalizing this
proof will ultimately lead to a proof of Clebsch's result that
a curve cannot be parametrized if its genus is positive.

\section*{Acknowledgements}
I would like to thank the referees for carefully reading the 
manuscript and suggesting many improvements.

\end{document}